\title{Measure comparison problems for dilations of convex bodies}
\author{Malak Lafi 
 and Artem Zvavitch}
 \thanks{Both authors are supported in part by the U.S. National Science Foundation Grant DMS-1101636 and
the United States - Israel Binational Science Foundation (BSF) Grant 2018115} 
\subjclass[2020]{Primary: 52A20, 52A21; Secondary: 46T12, 60F10} 
\keywords{Busemann-Petty problem,  log-concavity, large deviation.}
\date{}
\begin{document}

\newcommand{\vol}{\mathrm{vol}}

\newtheorem{defi}{Definition}
\newtheorem{ques}{Question}

\newtheorem{lemma}{Lemma}
\newtheorem{theorem}{Theorem}
\newtheorem{claim}{Claim}
\newtheorem{remark}{Remark}
\newtheorem{fact}{Fact}

\begin{abstract}We study a version of the Busemann-Petty problem for $\log$-concave measures with an additional assumption on the dilates of convex, symmetric bodies. One of our main tools is an analog of the classical large deviation principle applied to  
$\log$-concave measures, depending on the norm of a convex body. We hope this will be of independent interest.

\end{abstract}

\maketitle

\section{Introduction}

We denote by ${\mathbb R}^n$ the $n$-dimensional Euclidean space equipped with the inner product $\langle\cdot,\cdot\rangle$ and the standard orthonormal basis $\{e_1,\dots,e_n\}$, and denote by $|\cdot|$ the standard Euclidean norm on ${\mathbb R}^n$. For a measurable set $A\subset {\mathbb R}^n$ we refer to its volume (the Lebesgue measure) by $|A|,$ and its boundary by $\partial A.$ The notation $B_2^n$ stands for the closed unit ball in ${\mathbb R}^n,$ and ${\mathbb S}^{n-1}$ for the unit  sphere, i.e. ${\mathbb S}^{n-1}=\partial B_2^n$. A convex body is a convex, compact set with a nonempty interior. Furthermore, a convex body $K$ is symmetric if $K=-K$. A measure $\mu$ is $\log$-concave on ${\mathbb R}^n$ if for every pair of non-empty compact sets   $A$ and $B$ in $\mathbb{R}^n$ and $0<\lambda<1$ we have 
$$\mu (\lambda A+(1-\lambda)B)\ge \mu(A)^\lambda  \mu(B)^{1-\lambda},
$$
where the addition is the Minkowski sum which is defined as the set $A+B=\{a+b: a\in A, b\in B\},$ and the constant multiple (dilation) of a set $A \subset {\mathbb R}^n$ and $\alpha \in {\mathbb R}$ is defined as $\alpha A=\{\alpha a: a\in A\}$.   It follows from the Prékopa-Leindler inequality \cite{Prékopa1, Prékopa2, GardnerBM}, that if a measure $\mu$ that is defined on the measurable subsets of $\mathbb{R}^n$, is generated by a $\log$-concave density, then  $\mu$ is also $\log$-concave. Furthermore, Borell provides a characterization for $\log$-concave measures \cite{Borell}, precisely a locally finite and regular Borel measure $\mu$ is $\log$-concave, if and only if, its density (with respect to the Lebesgue on the
appropriate subspace) is $\log$-concave.

In 1956, Busemann and Petty \cite{BP} posed the following volume comparison problem:
Let $K$ and $L$ be symmetric convex bodies in $\mathbb{R}^n$ so that the $(n-1)$-dimensional volume of every central hyperplane section of $K$ is smaller than the same section for $L$. Does it follow that the $n$-dimensional volume of $K$ is smaller than the $n$-dimensional volume of $L$? In the late 90's, the Busemann-Petty problem was solved as a result of many works \cite{ Gardner, GKS, Kold5, Lutwak, Zhang2}. The answer is affirmative when $n\le 4,$ and negative whenever $n\ge 5$.   It is natural to consider an analog of the Busemann-Petty problem for a more general class of measures.  The first result in this direction was a solution of the Gaussian analog of the Busemann-Petty problem  \cite{Zvavitch1}. It turns out that the answer for the Busemann-Petty problem is the same if we replace the volume with the Gaussian measure. Moreover, it was proved in \cite{Zvavitch2} that the answer is the same if we replace the volume with any measure with continuous, positive, and even density.

V. Milman \cite{Vmilm} asked whether the answer to the Gaussian Busemann-Petty problem would change in a positive direction if we compared not only the Gaussian measure of sections of the bodies but also the Gaussian measure of sections of their dilates, that is, consider two convex symmetric bodies $K, L \subset\mathbb{R}^n $, such that 
\begin{equation*}
    \gamma_{n-1}(rK\cap \xi^\perp)\le \gamma_{n-1}(rL\cap \xi^\perp),  \hspace{0.5cm} \forall \xi\in\mathbb{S}^{n-1}, \hspace{0.5cm}\forall r>0,
\end{equation*}
 where $\xi^\perp$ denotes the central hyperplane perpendicular to $\xi.$ Does it follow that $$\gamma_n(K)\le\gamma_n(L)?$$
Here $\gamma_{n}$ denotes the standard Gaussian measure on $\mathbb{R}^n$, and $$\gamma_{n-1}(K\cap \xi^\perp)=\frac{1}{(\sqrt{2\pi} )^{n-1}} \int_{K\cap \xi^\perp} e^{-\frac{|x|^2}{2}} dx.$$
The addition of dilation to the Busemann-Petty problem, clearly, would not change anything in the case of the volume measure. Still in the case of more general $\log$-concave measures the behavior of the measure of a dilation of a convex body is very interesting, we refer to \cite{BN, CFM, F, LO} for just a few examples of such results.

Even though the dilation adds some strength to the condition of the bodies, the answer to the dilation problem for Gaussian measure is positive for $n\le 4$ and negative for $n\ge7$ (see \cite{Zvavitch3}). That leaves the problem open for $n=5, 6.$ 

To show the strength of the condition of the dilates,  it was proved in \cite{Zvavitch3} that the dilation problem has an aﬃrmative answer when $K$ is a dilate of a centered Euclidean ball: Consider a star body $L \subset\mathbb{R}^n$ and assume there exists $R>0,$ such that
$$\gamma_{n-1}(rRB_2^n\cap \xi^\perp)\le \gamma_{n-1}(rL\cap \xi^\perp),     \hspace{.5cm} \forall \xi\in\mathbb{S}^{n-1},\hspace{.5cm} \forall r>0$$ then it follows that $RB_2^n\subseteq L.$

In this paper, we review some generalizations of the above fact. In particular,  we study measures $\mu$ for which we  have an affirmative answer for the following problem:

\begin{ques}\label{ques1}
 Consider a convex, symmetric body $K \subset {\mathbb R}^n$ such that for every $t$ large enough and for some $R>0$
$$\mu(tRB_2^n) \le \mu (tK),$$
does it follow that $RB_2^n \subseteq K ?$
\end{ques}

In Section \ref{secRI}, we will present a solution for Question \ref{ques1} for the case of a  $\log$-concave, rotation invariant probability measure $\mu.$

In Section \ref{genL}, we consider a more general case, instead of comparing $K$ with $B_2^n$, we will compare $K$ with another convex, symmetric body $L$. Let us denote by   $\| x\|_{L}$  the Minkowski functional of $L$ which is defined to be $\| x\|_{L} = \min \{\lambda > 0: x \in \lambda L\}.$ 

\begin{ques}\label{ques2}  Let $K, L\subset \mathbb{R}^n$ be convex,   symmetric bodies, and let $\mu$ be a $\log$-concave probability measure, with density $e^{-\phi(\|x\|_{L})}$, where $\phi: [0,\infty) \to [0, \infty)$ is an increasing convex function. If for every $t$ large enough and some $R>0$
$$
\mu(tRL) \le \mu (tK),
$$
does it follow that  $RL \subseteq K$?
\end{ques}
One of the core steps in answering Questions \ref{ques1} and \ref{ques2} is a generalization of the classical large deviation principle, which is provided in Lemma \ref{lm2} and equation (\ref{ldL}) below: Consider two  symmetric, convex bodies $K, L \subset {\mathbb R}^n$, let $r(K, L)=\max\{R>0: RL\subset K\}$, then 
$$
\limsup_{t\to\infty}\frac{\ln{\mu((tK)^c)}}{\phi(r(K,L)t)}=-1,
$$
where $\mu$ is a $\log$-concave probability measure, with density $e^{-\phi(\|x\|_{L})}$ and  by $A^c$ we denote a complement of a set $A \subset {\mathbb R}^n,$ i.e.
$A^c =\mathbb{R}^n\setminus A$.

Finally, in Section \ref{BPdilation}, we will  discuss the generalization of the dilation problem for Gaussian measures:

\begin{ques}\label{BPr} Consider a measure $\mu$ with continuous positive density $f$. Let  $\mu_{n-1}(K\cap \xi^\perp)=\int_{K\cap \xi^\perp}f(x)dx.$ Consider  two convex symmetric bodies $K, L \subset\mathbb{R}^n $, such that 
$$
    \mu_{n-1}(rK\cap \xi^\perp)\le \mu_{n-1}(rL\cap \xi^\perp),  \hspace{0.5cm} \forall \xi\in\mathbb{S}^{n-1}, \hspace{0.5cm}\forall r>0,
$$
does it follow that 
$$
\mu(K)\le \mu(L)?
$$
\end{ques}
We show that in general, the answer is still negative in dimension $n\ge 5$, even under the assumption that $f$ is  non-constant $\log$-concave function. We also prove that if we add the requirement for the measure to be rotation invariant, the answer will be negative in dimension $n\ge 7$, which leaves the case of rotation invariant $\log$-concave measures open in dimension $n=5,6.$

{\bf Acknowledgments.}
We are grateful to Matthieu Fradelizi, Dylan Langharst, Fedor Nazarov and  Mokshay Madiman for a number of valuable discussions and suggestions.  Finally, we thank the two anonymous referees,
whose remarks and corrections were an enormous help!

\section{The case of rotation invariant measures}\label{secRI}
In this section we  consider a rotation invariant probability $\log$-concave measure $\mu$ with non-constant density, i.e.
$$
\mu(A)=\int_{A}e^{-\phi(|x|)}dx,
$$
where $\phi: [0,\infty) \to [0, \infty)$ is an increasing convex function. We will denote by $\phi'(t)$ the left derivative, in the case when the convex function $\phi(t)$ is not differentiable at $t$.

\begin{theorem}\label{thm1} Consider a convex, symmetric body $K \subset {\mathbb R}^n$ such that for every $t$ large enough and some $R>0$
$$
\mu(tRB_2^n) \le \mu (tK),
$$
then $RB_2^n \subseteq K.$
\end{theorem}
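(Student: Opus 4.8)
The plan is to argue by contradiction: suppose $RB_2^n \not\subseteq K$. Since $K$ is a symmetric convex body and $RB_2^n$ is a Euclidean ball, the failure of containment means there is a direction $\xi \in {\mathbb S}^{n-1}$ with $\|R\xi\|_K > 1$, i.e. the quantity $r(RB_2^n, K)$ — the largest $\rho$ with $\rho K \subset RB_2^n$ — satisfies $r(RB_2^n, K) < 1$; equivalently $R B_2^n$ sticks out of $K$ in some direction while only $r K$ with $r<1$ fits inside $RB_2^n$. Wait — I need to be careful about which inclusion I want. The hypothesis compares $\mu(tRB_2^n) \le \mu(tK)$, so I should compare the \emph{complements}: $\mu((tRB_2^n)^c) \ge \mu((tK)^c)$ for all large $t$. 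Taking logarithms, dividing by an appropriate normalizing quantity, and letting $t \to \infty$, the large deviation estimate of Lemma~\ref{lm2} (in the form stated as equation (\ref{ldL}) in the introduction) should force a comparison between the ``inradius-type'' constants governing the tail decay of $\mu$ on dilates of $RB_2^n$ versus dilates of $K$.

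Concretely, I would apply the large deviation principle with $L = B_2^n$: for any symmetric convex body $M$,
$$
\limsup_{t\to\infty} \frac{\ln \mu((tM)^c)}{\phi(r(M, B_2^n)\,t)} = -1,
$$
where $r(M,B_2^n) = \max\{\rho > 0 : \rho B_2^n \subset M\}$ is the inradius of $M$. For $M = RB_2^n$ this constant is exactly $R$, so $\ln \mu((tRB_2^n)^c) \sim -\phi(Rt)$. For $M = K$ the constant is $r(K, B_2^n) =: r_K$, the inradius of $K$, so $\ln \mu((tK)^c) \sim -\phi(r_K t)$. The hypothesis $\mu((tRB_2^n)^c) \ge \mu((tK)^c)$ then yields, after taking logs and using monotonicity of $\phi$ together with the asymptotics, that $R \le r_K$, i.e. $RB_2^n \subseteq K$ at the level of inradii. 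This already gives containment of $RB_2^n$ in $K$ \emph{if} the inradius of $K$ is attained isotropically, but in general $r_K B_2^n \subseteq K$ only tells us $R \le r_K$ does \emph{not} immediately give $RB_2^n \subseteq K$ unless $R \le r_K$... actually it does: $RB_2^n \subseteq r_K B_2^n \subseteq K$. So the only real content is establishing $R \le r_K$.

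The subtlety, and what I expect to be the main obstacle, is that the crude asymptotic $\ln\mu((tK)^c) \sim -\phi(r_K t)$ is governed \emph{only} by the inradius direction of $K$, which is too lossy: if $K$ is very far from a ball, the inradius $r_K$ can be much smaller than what is needed, and the inequality $R \le r_K$ would then be \emph{stronger} than $RB_2^n \subseteq K$ — good — but it could also be that $R \le r_K$ fails to capture directional information and we cannot conclude. Let me reconsider: we want to \emph{rule out} $RB_2^n \not\subseteq K$. If $RB_2^n \not\subseteq K$ then $r_K < R$ is \emph{false} in general — $K$ could be a long thin body containing a small ball but with $RB_2^n$ poking out; then $r_K < R$, contradicting $R \le r_K$. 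Conversely if $r_K \ge R$ then automatically $RB_2^n \subseteq r_K B_2^n \subseteq K$. So in fact the inradius comparison $R \le r_K$ is \emph{equivalent} to $RB_2^n \subseteq K$, and the large deviation principle delivers exactly this. The genuinely delicate point is therefore purely the limiting argument: one must handle the case where $\phi$ is eventually affine or grows slowly (so that ratios $\phi(Rt)/\phi(r_K t)$ tend to a constant $\ne 1$ rather than to $0$ or $\infty$), use left-derivatives and convexity of $\phi$ carefully, and make sure the $\limsup$ in Lemma~\ref{lm2} can be upgraded to the comparison of constants even when the two tail rates are of the same order. I would isolate this as a short lemma on comparing $\limsup \ln\mu((tRB_2^n)^c)/\phi(Rt)$ with $\limsup\ln\mu((tK)^c)/\phi(Rt)$, feeding in $r(K,B_2^n)$ via Lemma~\ref{lm2} and convexity of $\phi$ to conclude $R \le r(K,B_2^n)$, hence $RB_2^n \subseteq K$.
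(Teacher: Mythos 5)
Your proposal is correct and follows essentially the same route as the paper: pass to complements, apply Lemma~\ref{lm1} to $tRB_2^n$ and Lemma~\ref{lm2} to $tK$, and use convexity of $\phi$ (namely $\phi(r_K t)\le \frac{r_K}{R}\phi(Rt)+(1-\frac{r_K}{R})\phi(0)$, so $\phi(r_K t)/\phi(Rt)\le r'<1$ for large $t$ when $r_K<R$) to turn the two $\limsup$'s into the contradiction $-1\ge -r'$. The "short lemma" you defer at the end is exactly the paper's final computation, so no genuinely new idea is missing.
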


In order to prove the above theorem, we will need two lemmas.
\begin{lemma} \label{lm1}
Consider $R > 0,$ then
\begin{equation}\label{inlimsup}
\limsup_{t\to \infty}\frac{\ln{\mu((tRB_2^n)^c)}}{\phi(tR)}=-1.
\end{equation}

\end{lemma}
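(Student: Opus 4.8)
The plan is to reduce, via the substitution $s=tR$ (which sends $t\to\infty$ to $s\to\infty$ and keeps $\phi(tR)=\phi(s)$), the identity \eqref{inlimsup} to
\[
\limsup_{s\to\infty}\frac{\ln F(s)}{\phi(s)}=-1,\qquad F(s):=\mu\bigl((sB_2^n)^c\bigr)=n\omega_n\int_s^\infty\rho^{n-1}e^{-\phi(\rho)}\,d\rho,
\]
where $\omega_n=|B_2^n|$ and we pass to polar coordinates. I would first record the facts to be used: since $\mu$ is a probability measure with non-constant density and $\phi$ is increasing and convex, $\phi(\rho)\to\infty$, the left derivative $\phi'$ is nondecreasing, and there is $r_1>0$ with $c:=\phi'(r_1)>0$, so that $\phi'(\rho)\ge c$ and $\phi(\rho)\ge\phi(r_1)+c(\rho-r_1)$ for $\rho\ge r_1$; in particular $\phi(\rho)/\ln\rho\to\infty$. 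Moreover $F$ is $C^1$ on $(0,\infty)$ with $F'(s)=-n\omega_n s^{n-1}e^{-\phi(s)}<0$, and $F(s)>0$ for all $s$ because $e^{-\phi}>0$. The two inequalities $\limsup\le-1$ and $\limsup\ge-1$ are then established separately.

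For $\limsup\le-1$ I would use the supporting line of the convex function $\phi$ at $s$: for $\rho\ge s\ge r_1$ one has $\phi(\rho)\ge\phi(s)+\phi'(s)(\rho-s)\ge\phi(s)+c(\rho-s)$. Substituting into the integral defining $F$, setting $\rho=s+u$, expanding $(s+u)^{n-1}$, and using $\int_0^\infty u^k e^{-cu}\,du<\infty$, one obtains $F(s)\le C_n\,s^{n-1}e^{-\phi(s)}$ for $s\ge1$. Dividing, $\dfrac{\ln F(s)}{\phi(s)}\le -1+\dfrac{\ln C_n+(n-1)\ln s}{\phi(s)}$, and the last term tends to $0$ since $\phi(s)/\ln s\to\infty$; hence $\limsup_{s\to\infty}\frac{\ln F(s)}{\phi(s)}\le-1$.

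The reverse inequality $\limsup\ge-1$ is the crux, and the main obstacle is that a direct lower estimate of $F(s)$ by integrating $\rho^{n-1}e^{-\phi(\rho)}$ over an interval of fixed length fails here, because $\phi$ may rise arbitrarily fast on a short interval. Instead I would argue by contradiction through a differential inequality for $F$. Assume $\limsup_{s\to\infty}\frac{\ln F(s)}{\phi(s)}<-1$; then there are $\varepsilon>0$ and $s_0$ with $F(s)\le e^{-(1+\varepsilon)\phi(s)}$ for all $s\ge s_0$. Using $e^{-\phi(s)}=-F'(s)/(n\omega_n s^{n-1})$ and writing $\beta:=1/(1+\varepsilon)\in(0,1)$, this rearranges to $-F'(s)F(s)^{-\beta}\ge n\omega_n s^{n-1}$ for $s\ge s_0$. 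Integrating over $[s_0,T]$, the left-hand side equals $\bigl(F(s_0)^{1-\beta}-F(T)^{1-\beta}\bigr)/(1-\beta)\le F(s_0)^{1-\beta}/(1-\beta)<\infty$, while the right-hand side equals $\omega_n(T^n-s_0^n)\to\infty$ as $T\to\infty$ — a contradiction. Combining the two bounds yields the lemma. (The same kind of example, with $\phi$ growing much faster than linearly, shows the $\limsup$ cannot in general be replaced by an ordinary limit, so the statement is sharp as phrased.)
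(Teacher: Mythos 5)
Your proof is correct and follows essentially the same route as the paper: polar coordinates, a linear minorant of $\phi$ from convexity for the upper estimate, and a differential inequality $-F'F^{-\beta}\ge(\text{positive})$ integrated to a contradiction for the lower estimate. The only (harmless) difference is that you run the contradiction argument directly on $F(s)=\mu((sB_2^n)^c)$ with its Jacobian factor $s^{n-1}$, whereas the paper first discards that factor via $r^{n-1}\ge t^{n-1}$ and applies the same ODE trick to $\int_t^\infty e^{-\phi(r)}\,dr$.
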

\begin{proof} 
Without loss of generality, we may assume that 
$R=1$. Let us first show that the left-hand side of equality (\ref{inlimsup}) is less or equal to $-1$.
Writing the integral in polar coordinates, we get
$$ \limsup_{t\to\infty}\frac{\ln{\int_{S^{n-1}}\int_t^{\infty} 
e^{-\phi(r)} r^{n-1}drd\theta}}{\phi(t)} =\limsup_{t\to\infty}\frac{\ln{\int_t^{\infty} 
e^{-\phi(r)} r^{n-1}dr}}{\phi(t)},$$ we remind that $\lim_{t \to \infty} \phi(t)=\infty.$
Let $\eta(t)=-(n-1) \ln{t}+\phi(t)$. 
Using that $\phi$ is a convex and non-constant function, we get that there exists $t_0 \ge 0$ such that $ \phi'(t_0)>0$. Thus $\phi'(t)>0,$ for  all  $t>t_0$ and there exists constant $a>0$ such that $\eta'(t) >a,$ for all $t>t_0$. 
Thus, $\eta(r)\geq\eta(t)+ a(r-t)$, for $r>t>t_0$, and \begin{align*}
 \limsup_{t\to\infty}\frac{\ln{\int_t^{\infty} 
e^{-\phi(r)} r^{n-1}dr}}{\phi(t)} & \leq \limsup_{t\to\infty}\frac{\ln{\int_t^{\infty} e^{-\eta(t)-a(r-t)} dr}}{\phi(t)} \\ &=\limsup_{t\to\infty}\frac{\ln{e^{-\eta(t)}}+\ln{\int_t^{\infty} 
e^{-a(r-t)} dr}}{\phi(t)} \\ &=\limsup_{t\to\infty}\frac{\ln(t^{n-1}e^{-\phi(t)})+\ln{\frac{1}{a}}}{\phi(t)} \\ & =- 1 +\limsup_{t\to\infty}\frac{\ln \frac{1}{a} }{\phi(t)}\\
&=-1.
\end{align*}
Next, we will show that the right-hand side of equality (\ref{inlimsup}) is greater or equal to $-1$. Since $r>t$ we have
\begin{align*}
\limsup_{t\to\infty}\frac{\ln{\int_t^{\infty} 
e^{-\phi(r)} r^{n-1}dr}}{\phi(t)} & \geq \limsup_{t\to\infty}\frac{\ln\left(t^{n-1}\int_t^{\infty} 
e^{-\phi(r)} dr\right)}{\phi(t)}
\\& =\limsup_{t\to\infty}\frac{\ln{\int_t^{\infty} 
e^{-\phi(r)} dr}}{\phi(t)}.
\end{align*}
To finish proving the lemma, we prove the following claim.
\begin{claim}\label{claim1}
$\limsup_{t\to\infty}\frac{\ln{\int_t^{\infty} 
e^{-\phi(r)} dr}}{\phi(t)}\geq{-1}.$
\end{claim}
\noindent{\it Proof of Claim \ref{claim1}.} Assume the result is not true, then there exists $\alpha >1$ such that 
$$
\limsup_{t\to\infty}\frac{\ln{\int_t^{\infty} 
e^{-\phi(r)} dr}}{\phi(t)}<{-\alpha}.
$$
Thus, there exists $t_0>0$ such that for all $t >t_0$ we have, 
\begin{equation}\label{cont1}
\int_t^{\infty}e^{-\phi(r)}dr\leq e^{-\alpha\phi(t)}.
\end{equation}
Let $F(t)=\int_t ^{\infty}e^{-\phi(r)} dr$, note that $F^\prime (t)=-e^{-\phi(t)}$ and thus (\ref{cont1}) is equivalent to  
$
F(t)^{\frac{1}{\alpha}}\le -F^\prime (t).
$
Therefore for  $t>t_0$ we have 
$$
1\le-\frac{F^\prime (t)}{F(t)^{\frac{1}{\alpha}}}.
$$
 Integrating both sides of the above inequality over $t \in [t_0, \infty)$, we get that  $\frac{1}{1-\frac{1}{\alpha}} F(t)^{1-\frac{1}{\alpha}}$ is unbounded,  which gives a contradiction and the claim is proved. This finishes the proof of Lemma \ref{lm1}.
 \end{proof}
  \begin{remark}\label{remarkmeasure} We note that in Claim \ref{claim1} we have proved a stronger statement. Indeed, fix $\alpha >1$ and let
 $$
 E=\left\{t: \ln{\int_t^{\infty} 
e^{-\phi(r)} dr} < -\alpha\phi(t)\right\}
 $$
 then $|E|<\infty$. This follows from the fact that, again, for all $t\in E$ we have that 
$$1<-\frac{F^\prime (t)}{F(t)^{\frac{1}{\alpha}}},$$
thus
\begin{equation}\label{lessinfty}
|E| \le \int_E -\frac{F^\prime (t)}{F(t)^{\frac{1}{\alpha}}} dt \le \int_{t_0}^\infty -\frac{F^\prime (t)}{F(t)^{\frac{1}{\alpha}}} dt <\infty.
\end{equation}
 \end{remark}

\begin{remark}
It is tempting to replace limit superior by the actual limit in the statement of Lemma \ref{lm1}. This may be done in many particular cases of measure $\mu$, but it is not true in general. Indeed, if we assume that 
$$\lim_{t\to\infty}\frac{\ln{\int_t^{\infty} 
e^{-\phi(r)} dr}}{\phi(t)}={-1}.
$$
then there exits $T>0$ such that for all $t>T$ we have 
$$
\left|\frac{\ln{\int_t^{\infty} 
e^{-\phi(r)} dr}}{\phi(t)}+1 \right| \le 1.
$$
In particular 
\begin{equation}\label{estcon}
\int_t^{\infty} e^{-(\phi(r)-\phi(t))} dr \ge e^{-\phi(t)}.
\end{equation}
Using convexity of $\phi$ we get that $\phi(r)-\phi(t)\ge \phi'(t)(r-t)$ and thus, combining this with (\ref{estcon}) we get that
\begin{equation}\label{eq:sd}
\phi'(t)\le e^{\phi(t)}, \mbox{ for all } t>T. 
\end{equation}
Let us show that there is an increasing,
positive, convex, piecewise quadratic  function $\phi$ which has sufficiently large derivative at  a  sequence
of points $t_k \to \infty,$ such $\phi$  would contradict (\ref{eq:sd}).   

We  define function $\phi$ to be  quadratic on  each interval $[k, k+1]$ and show that there exist   $t_k\in (k, k+1)$,  for all $k\in \{0,1, \dots\},$ which would contradict (\ref{eq:sd}).  Let $\phi(0)=\phi'(0)=1$. Assume we have constructed desired  function $\phi$ on interval $[0, k]$ with $\phi(k)=a_k, \phi'(k)=b_k$. Consider an auxiliary quadratic function  $\phi_k: [k, \infty) \to [a_k, \infty)$, such that   $\phi_k'(t)=\alpha_k (t-k)+b_k,$ where $\alpha_k>0$ to be selected later. Thus, $\phi_k(t)=\alpha_k (t-k)^2/2+b_k(t-k)+a_k$. Our  goal is to find $t_k \in (k, k+1)$ and $\alpha_k$ such that 
$
\alpha_k (t-k)+b_k>e^{\alpha_k (t-k)^2/2+b_k(t-k)+a_k}.
$
Let $t_k=k+1/\sqrt{\alpha_k}$. Then  the previous inequality becomes
$
\sqrt{\alpha_k}+b_k>e^{1/2+b_k/\sqrt{\alpha_k}+a_k}
$
which is true for all $\alpha_k$ large enough (in particular allows us to guarantee that $t_k \in (k,k+1)$). We now set $\phi(t)=\phi_k(t)$, for $t \in [k, k+1]$ and repeat the process for the interval $[k+1, k+2]$.
\end{remark}

We remind that for two convex, symmetric bodies $K, L \subset {\mathbb R}^n$, we define $r(K, L)=\max\{R>0: RL\subset K\}$.
The next lemma may be seen as a generalization of the classical large deviation principle (see, for example,  Corollary 4.9.3 in \cite{Bogachev}).
\begin{lemma} \label{lm2}
Consider a symmetric body $K \subset {\mathbb R}^n$, then $$\limsup_{t\to\infty}\frac{\ln{\mu((tK)^c)}}{\phi(r(K, B_2^n)t)}=-1.$$
\end{lemma}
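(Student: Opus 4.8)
\emph{Plan of proof.} Write $r:=r(K,B_2^n)$ and recall that $r=\min_{\theta\in\mathbb S^{n-1}}\rho_K(\theta)$, where $\rho_K(\theta)=\max\{\rho>0:\rho\theta\in K\}=1/\|\theta\|_K$ is the radial function of $K$; by continuity this minimum is attained at some $\theta_0\in\mathbb S^{n-1}$, and $rB_2^n\subseteq K$. The plan is to trap $\mu((tK)^c)$ between the measures of the complements of two Euclidean balls---one from above and a carefully calibrated one from below---and then to invoke Lemma \ref{lm1} and Claim \ref{claim1}. The upper bound is immediate: from $trB_2^n\subseteq tK$ we get $(tK)^c\subseteq(trB_2^n)^c$, hence $\mu((tK)^c)\le\mu((trB_2^n)^c)$, and dividing by $\phi(rt)>0$ and applying Lemma \ref{lm1} with $R=r$ yields $\limsup_{t\to\infty}\frac{\ln\mu((tK)^c)}{\phi(rt)}\le-1$.

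For the reverse inequality I would use that $K$ is squeezed against its inscribed ball near the contact point $r\theta_0$. Since $rB_2^n\subseteq K$ and $r\theta_0\in\partial(rB_2^n)\cap\partial K$, any supporting hyperplane of $K$ at $r\theta_0$ also supports $rB_2^n$ there, hence equals $\{x:\langle x,\theta_0\rangle=r\}$; so $K\subseteq\{x:\langle x,\theta_0\rangle\le r\}$, and for every $\theta\in\mathbb S^{n-1}$ at angle $\psi\le 1$ from $\theta_0$ one has $\rho_K(\theta)\le r/\cos\psi\le r(1+\psi^2)$. Writing $\mu((tK)^c)$ in polar coordinates and discarding the directions outside the spherical cap $C_{\psi_t}=\{\theta\in\mathbb S^{n-1}:\langle\theta,\theta_0\rangle\ge\cos\psi_t\}$, for a radius $\psi_t\le 1$ still to be chosen, we obtain
$$\mu((tK)^c)\ \ge\ \sigma(C_{\psi_t})\int_{rt(1+\psi_t^2)}^{\infty}e^{-\phi(\rho)}\rho^{n-1}\,d\rho\ \ge\ c_n\,(rt\,\psi_t)^{n-1}\,F(rt(1+\psi_t^2)),$$
where $\sigma$ is the surface measure on $\mathbb S^{n-1}$, $F(s):=\int_s^\infty e^{-\phi(\rho)}\,d\rho$, and $c_n>0$ depends only on $n$ (here we used $\sigma(C_\psi)\ge c_n\psi^{n-1}$ and $\rho\ge rt$ on the range of integration).

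The heart of the matter is to calibrate $\psi_t$ so that the factors lost above become subexponential relative to $\phi(rt)$, \emph{even when $\phi$ grows very fast}. I would take $\psi_t$ with $rt\,\psi_t^2=\tfrac12 F(rt)e^{\phi(rt)}$. Since $\phi$ is convex and non-constant, $\phi'\ge c_0>0$ for large arguments, so $F(rt)e^{\phi(rt)}=\int_0^\infty e^{-(\phi(rt+v)-\phi(rt))}\,dv\le 1/\phi'(rt)$ is bounded, whence $\psi_t\to0$ and the choice is legitimate for all large $t$. With this choice $\int_{rt}^{rt(1+\psi_t^2)}e^{-\phi}\le rt\,\psi_t^2\,e^{-\phi(rt)}=\tfrac12 F(rt)$, so $F(rt(1+\psi_t^2))\ge\tfrac12 F(rt)$; substituting $\psi_t$ into the displayed estimate and taking logarithms gives, for all large $t$,
$$\ln\mu((tK)^c)\ \ge\ C_0+\frac{n-1}{2}\ln(rt)+\frac{n+1}{2}\ln F(rt)+\frac{n-1}{2}\,\phi(rt),$$
with $C_0=C_0(n)$ a constant. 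Dividing by $\phi(rt)$ and letting $t\to\infty$: the first two terms on the right tend to $0$; from $F(s)\le e^{-\phi(s)}/\phi'(s)$ one gets $\limsup_{s\to\infty}\frac{\ln F(s)}{\phi(s)}\le-1$, while Claim \ref{claim1} supplies the opposite inequality, so $\limsup_{t\to\infty}\frac{\ln F(rt)}{\phi(rt)}=-1$. Hence $\limsup_{t\to\infty}\frac{\ln\mu((tK)^c)}{\phi(rt)}\ge\frac{n-1}{2}-\frac{n+1}{2}=-1$, which combined with the upper bound proves the lemma.

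I expect the real obstacle to be exactly this calibration. A fixed-angle cap, or the crude half-space estimate built on $\sqrt{u^2+|y|^2}\le u+|y|$, loses a multiplicative factor that need not be of the form $e^{o(\phi(rt))}$ when $\phi$ increases as fast as in the rapidly-growing example constructed in the Remark above, so the lower bound one extracts for $\ln\mu((tK)^c)/\phi(rt)$ may drop below $-1$ and be useless. Letting the cap shrink at the rate $\psi_t\asymp\sqrt{F(rt)e^{\phi(rt)}/(rt)}$ neutralises every lost factor, and Claim \ref{claim1} is precisely what then produces a sequence $t_k\to\infty$ along which $\tfrac{n+1}{2}\ln F(rt_k)+\tfrac{n-1}{2}\phi(rt_k)=-(1+o(1))\phi(rt_k)$.
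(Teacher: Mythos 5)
Your argument is correct, and for the lower bound (the substantive half of the lemma) it takes a genuinely different route from the paper. The paper bounds $\mu((tK)^c)$ from below by the measure of the complement of a tangent plank $P=\{x:|\langle x,y\rangle|\le R\}$ containing $K$; after the triangle inequality and Fubini this reduces to $\int_{tR}^{\infty}(z-tR)^{n-1}e^{-\phi(z)}\,dz$, and taming the polynomial factor $(z-tR)^{n-1}$ is the whole difficulty: it requires Claim \ref{claim2}, proved by induction on the exponent via integration by parts together with a measure-of-exceptional-sets argument needed to combine one $\limsup$ with several $\liminf$'s. You instead keep polar coordinates and restrict to a spherical cap about the contact direction whose angular radius $\psi_t\asymp\sqrt{F(rt)e^{\phi(rt)}/(rt)}$ is calibrated so that (i) the radial truncation only moves the lower limit from $rt$ to $rt(1+\psi_t^2)$, costing at most a factor $2$ in $F$, and (ii) the cap measure $\asymp\psi_t^{n-1}$ contributes $\tfrac{n-1}{2}\bigl(\ln F(rt)+\phi(rt)\bigr)+O(\ln t)$ to the logarithm, so the total lower bound is $\tfrac{n+1}{2}\ln F(rt)+\tfrac{n-1}{2}\phi(rt)+O(\ln t)$ and Claim \ref{claim1} alone (the case $m=0$) finishes the proof; in fact you only need the inequality $\limsup_{t}\ln F(rt)/\phi(rt)\ge-1$ from that claim, not the full equality you derive. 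This bypasses Claim \ref{claim2} and its exceptional-set bookkeeping entirely, and your diagnosis of the calibration as the crux is accurate: a fixed cap or a crude half-space estimate loses a factor that need not be $e^{o(\phi(rt))}$ for rapidly growing $\phi$. What the paper's plank argument buys in exchange is that it transfers almost verbatim to the norm $\|\cdot\|_L$ in Theorem \ref{thm2}, where Euclidean polar coordinates are unavailable; your cap argument would need to be redone there with the layer-cake decomposition adapted to $L$. (You also use convexity of $K$ to obtain the supporting half-space at $r\theta_0$; this is implicit in the lemma's context and is equally used in the paper's tangency argument.)
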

\begin{proof}
Let $R=r(K, B_2^n)$, then $(tK)^c\subset (tRB_2^n)^c$. Using Lemma \ref{lm1} we get $$\limsup_{t\to\infty}\frac{\ln{\mu((tK)^c)}}{\phi(tR)}\leq \limsup_{t\to\infty}\frac{\ln{\mu((tRB_2^n)^c)}}{\phi(tR)}=-1.$$ 
To obtain the reverse inequality, we denote by $P$ a plank of width $2R$ which contains $K$. More precisely, using the maximality of $R$ there exists at least two tangent points  $y, -y \in R{\mathbb S}^{n-1} \cap \partial K$, thus  we may consider  $P=\{x \in {\mathbb R}^n: |\langle x, y\rangle | \le R\}$. Next,  $$\limsup_{t\to\infty}\frac{\ln{\mu((tK)^c)}}{\phi(tR)}\geq \limsup_{t\to\infty}\frac{\ln{\mu((tP)^c)}}{\phi(tR)}.$$
By the rotation invariant of $\mu$, we may assume that $y=Re_n$, and so
$$
\mu((tP)^c)=2\int_{tR}^{\infty} \int_{\mathbb R^{n-1}}e^{-\phi (|ze_{n}+x|)} dx dz.
$$
Using the triangle inequality and the polar coordinates we get
\begin{align*}
    \mu((tP)^c) &\geq  2\int_{tR}^{\infty} \int_{\mathbb R^{n-1}}e^{-\phi (z+|x|)} dx dz \\ 
 &=2\int_{tR}^{\infty} \int_{\mathbb S^{n-2}} \int_0 ^{\infty} e^{-\phi (z+r)} r^{n-2} dr d\theta dz\\
 &=2|{\mathbb S^{n-2}}| \int_0^{\infty}r^{n-2}\int_{tR}^{\infty}e^{-\phi (z+r)} dz dr\\&=2|{\mathbb S^{n-2}}| \int_0^{\infty}r^{n-2}\int_{tR+r}^{\infty}e^{-\phi (z)} dz dr\\ 
&=2|\mathbb S^{n-2}|\int_{tR}^{\infty}e^{-\phi(z)} \int_0^{z-tR} r^{n-2} dr dz\\&=2\frac{|\mathbb{S}^{n-2}|}{n-1} \int_{tR}^{\infty} (z-tR)^{n-1}e^{-\phi(z)} dz.
\end{align*}
Now to finish the proof of Lemma 2 we need to prove the following claim.
\begin{claim}\label{claim2}
   $$
   \limsup_{t\to\infty} \frac{\ln{\int_{tR}^{\infty} (z-tR)^m e^{-\phi(z)} dz} }{\phi (Rt)} \geq -1,
   $$ 
   for any non-negative integer $m$.
\end{claim}
\noindent{\it Proof of Claim \ref{claim2}.}
Making the change of variables we get
$$
   \limsup_{t\to\infty} \frac{\ln{\int_{tR}^{\infty} (z-tR)^m e^{-\phi(z)} dz} }{\phi (Rt)} =
   \limsup_{t\to\infty} \frac{\ln{\int_{t}^{\infty} (r-t)^m e^{-\phi(r)} dr} }{\phi (t)}.
   $$ 
We will first prove the following inductive step: fix a non-negative integer $m $,  and let 
$$F_m(t)=\int_{t}^{\infty} (r-t)^{m}e^{-\phi(r)}dr,$$
then
\begin{equation}\label{induct1}
\liminf_{t\to\infty} \frac{\ln F_m(t)}{\ln F_{m-1}(t)} =1, \mbox{ for all } m\in {\mathbb N}.
\end{equation}

We note that $F_m(t)\le 1$, for $t$ large enough, and thus the denominator and numerator are negative. It is a bit easier to work with a fraction when both the denominator and numerator are non-negative. So we will prove that 
$\liminf\limits_{t\to\infty} \frac{-\ln F_m(t)}{-\ln F_{m-1}(t)} =1.$ 
Using integration by parts, we get
$$
F_{m-1}(t)
=\frac{1}{m}\int_t^\infty (r-t)^m \phi'(r) e^{-\phi(r)} dr \ge  \frac{1}{m}\phi'(t)\int_t^\infty (r-t)^m  e^{-\phi(r)} dr,
$$
where, again, we denote by $\phi'(t)$ the left derivative of $\phi$. Thus
$$  -\ln F_{m-1}(t) \le -\ln(\phi'(t)/m)-\ln F_m(t),
$$
and
$$
\liminf_{t\to\infty} \frac{-\ln F_m(t) }{-\ln F_{m-1}(t) } \ge 
\liminf_{t\to\infty} \frac{\ln(\phi'(t)/m)-\ln F_{m-1}(t)}{
-\ln F_{m-1}(t)}.
$$
Now we may use that  $\phi'(t) >a>0$ for $t$ large enough, and $\lim\limits_{t \to \infty}\ln F_{m-1}(t)=-\infty$ to claim that
\begin{equation}\label{big}
\liminf_{t\to\infty} \frac{-\ln F_m(t) }{-\ln F_{m-1}(t) } \ge 
\liminf_{t\to\infty} \frac{\ln(a/m)-\ln F_{m-1}(t)}{
-\ln F_{m-1}(t)} \ge 1.
\end{equation}

To prove the reverse inequality we note that 
$$ F_m'(t)=-m\int_{t}^{\infty} (r-t)^{m-1} e^{-\phi(r)} dr.
$$
Assume that
$$
\liminf_{t\to\infty} \frac{ -\ln{F_m(t)}}{-\ln(-\frac{1}{m}F_m'(t))} >\alpha>1,
$$
but then, again there exists $t_0>0$ such that for all $t > t_0,$ we have
$$
-\ln{F_m(t)} > -\alpha \ln(-\frac{1}{m}F_m'(t)),
$$
and thus
$$
m < \frac{-F_m'(t)}{F_m(t)^\frac{1}{\alpha}}.
$$
Take an integral over $t \in [x, \infty)$ from both sides to get
$
F_m^{1-\frac{1}{\alpha}}(x) =\infty,
$
which is a contradiction. This finishes the proof of the inductive step, but we
actually need a bit stronger statement, which is similar to Remark \ref{remarkmeasure}. Indeed, consider any $m\in {\mathbb N}$  and $\alpha> 1$. Let
$$
E_{m,\alpha}=\{t: -\ln F_m(t)  >- \alpha \ln F_{m-1}(t)\},
$$
then, using the same ideas as in (\ref{lessinfty}) we get 
 $|E_{m,\alpha}|<\infty$.

To complete our  proof, let  
$$X_i (t)=\frac{\ln F_i(t)}{ \ln F_{i-1}(t)}  \mbox{ and } Y(t)=\frac{\ln F_0(t)}{\phi(t)}.$$ 
Using (\ref{induct1}) we get  
$\liminf\limits_{t\to{\infty}} X_{i}(t)=1$ and using (\ref{inlimsup}) 
 we get $\limsup\limits_{t\to\infty}Y(t)= -1.$ 
 Now let $X(t)=\prod_{i=1}^{m}X_{i}(t)$. Our goal is to prove that 
 $$
\limsup_{t\to\infty}X(t)Y(t) \ge -1.
$$

Assume that this is not true, then there exists $\alpha >1$ such that 
$$
\limsup_{t\to\infty}X(t)Y(t)<{-\alpha}<-1.
$$ Therefore, there exists $t_0$ such that for all $t>t_0$ \begin{equation}\label{assumption}X(t)Y(t)\le-\alpha.
\end{equation}
Using (\ref{big}) we may also assume that $X_i(t)>0$ for all $t>t_0.$ Next, consider the set  $$
A:=\left\{t>t_0: X(t)>\frac{\alpha+1}{2}\right\},$$
we claim that $|A|<\infty$. Note that 
    $$\left|\left\{t:X(t)>\frac{\alpha+1}{2}\right\}\right| \le \left|\left\{t:  X_i(t) >\left(\frac{\alpha+1}{2}\right)^\frac{1}{m}\!\!\!\!\!\!, \mbox{ for some } i\in\{1,\dots,m\}\right\}\right| $$
    $$< \sum_{i=1}^{m} \left|\left\{t:X_{i}(t)>\left(\frac{\alpha+1}{2}\right)^\frac{1}{m} \right\}\right|<\infty.$$
We also note that $\frac{2\alpha}{\alpha+1}>1$ and thus
$$
\left|\left\{t: Y(t) < -\frac{2 \alpha}{\alpha+1}\right\}\right|<\infty.
$$
Finally
$$
\left|\left\{t> t_0: X(t)Y(t) < -\alpha\right\}\right| =\left|\left\{t> t_0: Y(t) < -\frac{\alpha}{X(t)}\right\}\right| $$
$$\le |A|+
\left|\left\{t> t_0: Y(t) < -\frac{\alpha}{X(t)} \mbox{  and } X(t)<\frac{\alpha+1}{2}\right\}\right|$$
$$
\le |A|+
\left|\left\{t: Y(t) < -\frac{2 \alpha}{\alpha+1}\right\}\right|<\infty,
$$
which contradicts with (\ref{assumption}). The claim is proved and this finishes the proof of Lemma \ref{lm2}.
\end{proof}

We are now ready to prove Theorem \ref{thm1}.
\begin{proof}
 Let $K\subset {\mathbb R}^n $ be a convex,  symmetric body such that $\mu(tRB_2^n) \le \mu (tK)$ holds for for some  fixed $R>0$ and every $t$ large enough, but $RB_2^n \not\subset K.$ Thus, the maximal Euclidean ball in $K$ has radius $rR$, with $r\in (0,1)$. From the assumption it follows that $$\mu((tRB_2^n)^c)\ge\mu((tK)^c),$$ which implies that
 \begin{equation*}
      \frac{\ln{\mu((tRB_2^n)^c)}}{\phi(tR)}\ge\frac{\ln{\mu((tK)^c)}}{\phi(trR)}\frac{\phi(trR)}{\phi(tR)}.
 \end{equation*}
 From the convexity of $\phi$ and  $r \in (0,1)$ we get that
 $$\phi(trR)=\phi(trR+(1-r)0)\le r\phi(tR)+(1-r)\phi(0).$$ 
 Using that $\phi(tR) \to \infty $ we get that there exists $r'\in (0,1)$ and $t_0>0$ such that  $\frac{\phi(trR)}{\phi(tR)}\le r',$ for all $t>t_0$. Thus
  \begin{equation}\label{cont2}
      \frac{\ln{\mu((tRB_2^n)^c)}}{\phi(tR)}\ge r'\frac{\ln{\mu((tK)^c)}}{\phi(trR)},
 \end{equation} 
 for all $t>t_0$. Taking the limit superior, as $t\to \infty$, from both sides of the inequality (\ref{cont2}), we obtain $-1\ge -r'.$ But this contradicts the fact that $r'$ is less than $1$. Therefore, our assumption that $RB_2^n \not\subset K$ must be false.
\end{proof}


\begin{remark} The rotation invariant assumption on $\mu$ in Theorem \ref{thm1} is necessary. Indeed, one can construct an example of a $\log$-concave probability measure that is not rotation invariant in $\mathbb{R}^2$ which does not satisfy the statement of Theorem \ref{thm1}. Consider the rectangle $\Omega=\{(x, y): |x|\le\frac{\pi}{2}, |y|\le \frac{1}{2}\}$, and define the measure $\mu$ as  $\mu (K)=\frac{|K\cap \Omega|}{|\Omega|}$.
Taking $K=\Omega$, we have  
$$
B_2^2\not\subset \Omega, \mbox{ but  }  |B_2^2|=|\Omega|=\pi \mbox{ and } |tB_2^2|=|t\Omega|, \hspace{0.1cm}\forall t>0.
$$
Note that $\mu(tB_2^2)\le\mu(t\Omega);\hspace{.1cm} \forall t>0$, indeed, this is equivalent to $|tB_2^2\cap \Omega|\le |t\Omega\cap \Omega|$. If $t\le 1$, then we have $|tB_2^2\cap \Omega|\le |t B_2^2|=|t\Omega|$, and if $t\ge 1$ we get $|tB_2^2 \cap \Omega|\le |\Omega|.$ So, we provided an example where $$\mu(tB_2^2)\le\mu(tK);\hspace{.5cm} \forall t>0$$ but $B_2^2\not\subset K.$ 
\end{remark}

\section{The cases where density depends on the norm}\label{genL}
    In this section, we would like to give a proof   Theorem \ref{thm1} in a more general case, which would answer Question \ref{ques2}. The main idea and computation are in the same spirit as in the proof of Theorem \ref{thm1}.
   \begin{theorem}  \label{thm2}
        Let $K, L\subset \mathbb{R}^n$ be convex,   symmetric bodies, and let $\mu$ be a $\log$-concave probability measure, with  density 
$e^{-\phi(\|x\|_{L})}$, where $\phi: [0,\infty) \to [0, \infty)$ is an increasing, convex function. If for every $t$ large enough and some $R>0$
$$
\mu(tRL) \le \mu (tK),
$$
then $RL \subseteq K.$
   \end{theorem}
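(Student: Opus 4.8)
The plan is to mirror the proof of Theorem \ref{thm1}, replacing the Euclidean ball $B_2^n$ with the body $L$ and the Euclidean norm $|\cdot|$ with the Minkowski functional $\|\cdot\|_L$ throughout. The first step is to establish the analog of Lemma \ref{lm1}: for any $R>0$,
\[
\limsup_{t\to\infty}\frac{\ln\mu((tRL)^c)}{\phi(tR)}=-1.
\]
For the upper bound one writes $\mu((tRL)^c)=\int_{\{\|x\|_L>tR\}}e^{-\phi(\|x\|_L)}dx$ and passes to ``polar coordinates adapted to $L$'': integrating in the radial variable $\rho=\|x\|_L$ produces a factor $\rho^{n-1}|\partial L|_L$ (the surface-area-type constant of $L$), exactly as $r^{n-1}|\mathbb S^{n-1}|$ appeared before. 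The rest of the estimate — setting $\eta(t)=-(n-1)\ln t+\phi(t)$, using convexity and non-constancy of $\phi$ to get $\eta'(t)>a>0$ eventually, and bounding the tail integral — goes through verbatim, and Claim \ref{claim1} (together with Remark \ref{remarkmeasure}) applies unchanged since it only involves $\phi$ and one-dimensional integrals.

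The second step is the analog of Lemma \ref{lm2}: with $r(K,L)=\max\{R'>0:R'L\subset K\}$,
\[
\limsup_{t\to\infty}\frac{\ln\mu((tK)^c)}{\phi(r(K,L)t)}=-1.
\]
The upper bound is immediate from $(tK)^c\subset(tr(K,L)L)^c$ and the first step. For the lower bound, the role of the ``plank'' $P$ must be adapted: by maximality of $R'=r(K,L)$ there is a boundary point $y\in\partial K\cap R'\partial L$ where $R'L$ touches $K$, hence a supporting hyperplane of $K$ at $y$ with outer normal $u$, and $\langle y,u\rangle=h_K(u)=R'\,h_L(u)$; one takes $P=\{x:|\langle x,u\rangle|\le R'h_L(u)\}\supset K$. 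The key difference from Theorem \ref{thm1} is that $\mu$ is no longer rotation invariant, so one cannot rotate $u$ to $e_n$; instead one estimates $\mu((tP)^c)$ directly using the triangle inequality $\|x\|_L\le \|(x\cdot u)\,v\|_L+\|x-(x\cdot u)\,v\|_L$ for a suitable fixed vector $v$ with $\langle v,u\rangle=1$ and $\|v\|_L=h_L(u)^{-1}\cdot(\text{something})$ — more carefully, one slices $\mathbb R^n$ by the hyperplanes $\{\langle x,u\rangle=s\}$, writes a point as $s v + w$ with $w$ in the complementary hyperplane, uses $\|sv+w\|_L\le |s|\,\|v\|_L+\|w\|_L$ together with $|s|\ge tR'h_L(u)$ on $(tP)^c$, and then reduces, via the one-dimensional integral in $s$ and an $(n-1)$-dimensional integral in $w$ (in $\|\cdot\|_L$-polar coordinates on the hyperplane), to an estimate of the shape $\int_{tR'}^\infty (z-tR')^{n-1}e^{-\phi(z)}dz$ after an affine change of variables. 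This is exactly the quantity handled by Claim \ref{claim2}, whose proof — the inductive step comparing $F_m$ to $F_{m-1}$ via integration by parts, the finiteness of the exceptional sets $E_{m,\alpha}$, and the final measure-theoretic argument combining $X_i$ and $Y$ — depends only on $\phi$ and carries over word for word. The main obstacle is precisely this adaptation of the plank estimate: choosing the vector $v$ and the complementary slicing so that the triangle inequality for $\|\cdot\|_L$ separates the ``$z$-direction'' cleanly and the affine change of variables lands exactly on $\int (z-tR')^{n-1}e^{-\phi(z)}dz$, while only losing multiplicative constants that die in the limit.

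With Lemmas \ref{lm1} and \ref{lm2} in their $L$-versions, the final step repeats the proof of Theorem \ref{thm1} verbatim. Suppose $\mu(tRL)\le\mu(tK)$ for all large $t$ but $RL\not\subset K$; then $r(K,L)=rR$ with $r\in(0,1)$, and $\mu((tRL)^c)\ge\mu((tK)^c)$ gives
\[
\frac{\ln\mu((tRL)^c)}{\phi(tR)}\ge\frac{\ln\mu((tK)^c)}{\phi(trR)}\cdot\frac{\phi(trR)}{\phi(tR)}.
\]
Convexity of $\phi$ with $\phi(0)\ge 0$ yields $\phi(trR)\le r\phi(tR)+(1-r)\phi(0)$, so $\phi(trR)/\phi(tR)\le r'<1$ for some $r'\in(0,1)$ and all large $t$; taking $\limsup$ and applying the two lemmas gives $-1\ge -r'$, a contradiction. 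Hence $RL\subseteq K$. Throughout, one should double-check that ``$\phi$ increasing and non-constant'' is genuinely used (it is needed to guarantee $\phi'(t)>a>0$ eventually and $\phi(t)\to\infty$), and that the constants $|\partial L|_L$, $h_L(u)$, etc., are all finite and positive, which holds since $L$ is a symmetric convex body.
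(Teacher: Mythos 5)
Your proposal is correct and follows essentially the same route as the paper: reduce $\mu((tRL)^c)$ to $n|L|\int_{t R}^\infty v^{n-1}e^{-\phi(v)}\,dv$ (the paper does this via a layer-cake computation rather than $L$-polar coordinates, but the outcome is identical), bound $K$ by the plank through the tangent point of $R'L$ and $K$, and feed the resulting tail integral into Claim \ref{claim2}. The one point you flag as the ``main obstacle'' --- the choice of $v$ --- does need to be pinned down, since with a generic $v$ satisfying $\langle v,u\rangle=1$ the change of variables produces $\int_{ctR'}^\infty(z-ctR')^{n-1}e^{-\phi(z)}\,dz$ with $c=h_L(u)\,\|v\|_L$, and $c>1$ would ruin the limit (e.g.\ for $\phi(t)=e^t$). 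The resolution is exactly the paper's geometric choice: take $v=y/(R'h_L(u))$, i.e.\ the tangent point itself rescaled, so that $\langle v,u\rangle=1$ and $\|v\|_L=1/h_L(u)$, hence $c=1$; your triangle-inequality slicing with this $v$ is then equivalent to the paper's device of inscribing in $\phi^{-1}(u)L$ a cone with base $L\cap u^\perp$ and apex at the tangent direction.
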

   
   \begin{proof}
       We have to check Lemma \ref{lm1} and Lemma \ref{lm2}, i.e. to prove yet another generalization of the classical large deviation principle (see (\ref{ldL}) below).
       
       We claim that for any  $R>0$ 
    \begin{equation*}
    \limsup_{t\to \infty}\frac{\ln{\mu((tRL)^c)}}{\phi(tR)}=-1.
    \end{equation*}
We can assume $R=1$. Moreover, as before, using convexity of $\phi$ we may assume that $\phi(t)$ is a strictly increasing function for large enough $t$. Thus,
\begin{align}\label{trick}
    \mu(&(tL)^c) = \int_{(tL)^c} e^{-\phi (\|x\|_L)} dx  = \int_{(tL)^c} \int_{\phi(\|x\|_L) }^{\infty} e^{-u} du dx \nonumber \\&= \int_{\mathbb{R}^n} \int_{\phi(\|x\|_L)}^{\infty} \!\!\!\chi_{(tL)^c} (x) e^{-u} du dx= \int_{0}^{\infty} \int_{\{x: \phi(\|x\|_L)<u\}}\!\!\! \chi_{(tL)^c} (x) e^{-u} dx du \nonumber \\&= \!\int_{0}^{\infty}\!\!\!e^{-u} |\{x\!\!:  \|x\|_L \in [t, \phi^{-1}(u)]\}| du = |L| \!\int_{\phi(t)}^{\infty} ((\phi^{-1}(u))^n-t^n) e^{-u}du \nonumber\\&= |L| \int_{t}^{\infty} (v^n-t^n) \phi^{\prime}(v) e^{-\phi(v)} dv = -|L| \int_{t}^{\infty} (v^n-t^n) d e^{-\phi(v)}\nonumber\\& = n|L| \int_{t}^{\infty} v^{n-1} e^{-\phi(v)} dv,
\end{align}
note that  $\phi(t)$ may be a constant function on some interval  $[0, t_0]$ and strictly increasing on $[t_0, \infty)$, in such a case  we define  $\phi^{-1}(\phi(0))=t_0$.
     So, we have \begin{align*}
    \limsup_{t\to\infty}\frac{\ln\mu((tL)^c)}{\phi(t)} &=\limsup_{t\to\infty}\frac{\ln ({n|L|\int_t^{\infty} 
e^{-\phi(v)} v^{n-1}dv})}{\phi(t)} \\&=\limsup_{t\to\infty}\frac{\ln {\int_t^{\infty} 
e^{-\phi(v)} v^{n-1}dv}}{\phi(t)},\\
&=-1,
     \end{align*}
where the last equality follows from the proof of  Lemma \ref{lm1}. 

    To finish the proof, we must check Lemma \ref{lm2}. In particular, we want to show that
    \begin{equation}\label{ldL}\limsup_{t\to\infty}\frac{\ln{\mu((tK)^c)}}{\phi(r(K,L)t)}=-1,
    \end{equation}
for  symmetric, convex bodies $K, L \subset {\mathbb R}^n$, convex, increasing function $\phi:[0, \infty) \to [0, \infty)$ and measure $\mu$ with density $e^{-\phi(\|x\|_L)}$.

Let $R=r(K,L)$, then we have $(tK)^c\subset (tRL)^c$ from the assumption. Thus using Lemma \ref{lm1} we get $$\limsup_{t\to\infty}\frac{\ln{\mu((tK)^c)}}{\phi(tR)}\leq \limsup_{t\to\infty}\frac{\ln{\mu((tRL)^c)}}{\phi(tR)}=-1.$$ 
Using that   $RL$ is the maximal dilate of $L$ inside $K$ we get that there is a pair of points $v, -v \in \partial RL\cap \partial K$. Let $P$ be a plank created by tangent planes to $RL$ and $K$ at $v$ and $-v$. Let $n_v$ be a normal vector to $\partial RL$ at $v$. Then the width of the plank $P$ is $2Rh_L(n_v)=2h_K(n_v)$: $P=\{x \in {\mathbb R}^n: |\langle x, n_v\rangle| \le Rh_L(n_v)\}$, where $h_L (x)=\sup \{\langle x, y \rangle  : y\in L\}$ is the support function of $L$ (see \cite{Schneider} for basic definitions and properties). 
Next,  $$\limsup_{t\to\infty}\frac{\ln{\mu((tK)^c)}}{\phi(tR)}\geq \limsup_{t\to\infty}\frac{\ln{\mu((tP)^c)}}{\phi(tR)}.$$
Selecting a proper system of coordinates, we may assume that $n_v=e_n$, let $a=tRh_L(e_n).$ Then,
\begin{align*}
    \mu((tP)^c) &= 2\int_{a}^{\infty} \int_{e_n^\perp}e^{-\phi (\|ze_n +x\|_L)} dx dz\\ &= 2\int_{a}^{\infty}\int_{e_n^\perp} \int^\infty_{\phi (\|z e_n + x\|_L)} e^{-u} du dx dz \\
    &= 2\int_{a}^{\infty}\int_{0}^\infty \int_{\{x \in e_n^\perp: \phi (\|ze_n+ x\|_L)<u\}} e^{-u} dx du dz  \\
    &= 2\int_{a}^{\infty}\int_{0}^\infty e^{-u}\left|\left\{x \in e_n^\perp: \|ze_n + x\|_L\le \phi^{-1}(u)\right\}\right| du dz.
\end{align*}
 Now note that 
$$
\left|\left\{x \in e_n^\perp: \|ze_n + x\|_L\le \phi^{-1}(u)\right\}\right|=
\left|\left\{x \in e_n^\perp: ze_n + x \in  \phi^{-1}(u)L\right\} \right|.
$$
The above volume is zero if $z> \phi^{-1}(u)h_L(e_n)$ (or $\phi(z/h_L(e_n)) >u$), for $z \in [0, \phi^{-1}(u)h_L(e_n)]$ we note that $\phi^{-1}(u) L$ is a convex body and thus contains inside a pyramid $\Delta$ with base $L\cap e_n^\perp$ and the height $\phi^{-1}(u)h_L(e_n)$ (with apex $\phi^{-1}(u)h_L(e_n)v/R$), then
$$
\left|\left\{x \in e_n^\perp: ze_n + x \in  \phi^{-1}(u)L \right\}\right| \ge |\Delta\cap (e_n^\perp+ze_n)|$$
$$=(\phi^{-1}(u)h_L(e_n)-z)^{n-1}|L\cap e_n^\perp|.
$$
Thus
\begin{align*}
    \mu((tP)^c) &\ge 2 |L\cap e_n^\perp|\int_{a}^{\infty}\int_{\phi(z/h_L(e_n))}^\infty e^{-u}
    (\phi^{-1}(u)h_L(e_n)-z)^{n-1}
    du dz \\
    &= 2 |L\cap e_n^\perp|\int_{a}^{\infty}\int_{z/h_L(e_n)}^\infty e^{-\phi(u)}\phi'(u)
    (u h_L(e_n)-z)^{n-1}
    du dz\\
    &= -2 |L\cap e_n^\perp|\int_{a}^{\infty}\int_{z/h_L(e_n)}^\infty 
    (u h_L(e_n)-z)^{n-1}
    de^{-\phi(u)} dz\\
    &= 2(n-1) |L\cap e_n^\perp|\int_{a}^{\infty}\int_{z/h_L(e_n)}^\infty e^{-\phi(u)}
    (u h_L(e_n)-z)^{n-2}
    du dz\\
     &= 2(n-1) |L\cap e_n^\perp|\int_{a/h_L(e_n)}^{\infty}\int_{a}^{h_L(e_n)u} e^{-\phi(u)}
    (u h_L(e_n)-z)^{n-2}
    dz du\\
    &= 2 |L\cap e_n^\perp|\int_{a/h_L(e_n)}^{\infty} e^{-\phi(u)}
    (u h_L(e_n)-a)^{n-1}
    du\\
    &= 2 h_L^{n-1}(e_n)|L\cap e_n^\perp|\int_{tR}^{\infty} e^{-\phi(u)}
    (u -tR)^{n-1}
    du.\\
\end{align*}
So, we have \begin{align*}
    \limsup_{t\to\infty}\frac{\ln \mu((tP)^c)}{\phi(tR)} &\ge \limsup_{t\to\infty}\frac{\ln \left(2 h_L^{n-1}(e_n)|L\cap e_n^\perp|\int_{tR}^{\infty} e^{-\phi(u)}
    (u -tR)^{n-1} du\right)}{\phi(tR)} \\&=\limsup_{t\to\infty}\frac{\ln\int_{tR}^{\infty} e^{-\phi(u)} (u -tR)^{n-1}du }{\phi(tR)}.
     \end{align*}
    By Claim 2 the above quantity is greater than or equal to $-1$, thus Lemma \ref{lm2} is applied here which finishes the proof for our main result. 
   \end{proof}

\begin{remark}
    The proofs for Theorem \ref{thm1} and Theorem \ref{thm2} apply similarly to an asymmetric convex body $K$ with the origin as an interior point of it, the only difference is that instead of dealing with a plank $P$ in Lemma \ref{lm2} we need to work with a half-space. Specifically, for Theorem \ref{thm1}, one would use  the half-space $H=\{x \in {\mathbb R}^n: \langle x, y\rangle  \le R\}$ where $y \in R{\mathbb S}^{n-1} \cap \partial K$. For Theorem \ref{thm2}, one may  use the half-space defined by $H=\{x \in {\mathbb R}^n: \langle x, n_v\rangle \le Rh_L(n_v)\},$ where $n_v$ is the normal vector to $\partial RL$ at a tangent point $v$.
\end{remark}
    
\section{The Busemann - Petty type problems}\label{BPdilation}
   
In this section, we will discuss Question \ref{BPr}. We first note that one must make some additional assumptions on the measure $\mu$ to avoid a trivial answer. Indeed, if a measure $\mu$ has a homogeneous density (i.e. $f(rx)=r^pf(x),$ for $r>0$ and $p>1-n$), then the answer is identical to the one given in \cite{Zvavitch2}. 

Let us first show that in dimension $n\ge 5$ one can always find a pair of convex, symmetric bodies $K$ and $L$ and measure $\mu$, such that the answer to Question \ref{BPr} is negative. The main idea follows from the construction in  \cite{Zvavitch3}. We begin with the following fact:

\noindent{\bf Fact.} {\it  If $d\mu= e^{-\phi (\|x\|_L)} dx$ is a $\log$-concave  measure and $K, L\subset \mathbb{R}^n$ are convex,  symmetric bodies such that $|K|\le|RL|$ for some $R>0,$ then $$ \mu(K)\le \mu(RL).$$}

\begin{proof}
    Using calculations similar to (\ref{trick}) we get 
    \begin{equation*}
    \mu(K) = \int_{0}^{\infty} e^{-u} |K\cap\phi^{-1}(u) L| du
\end{equation*}
and
$$
\mu(RL)= \int_{0}^{\infty} e^{-u} |RL\cap\phi^{-1}(u) L| du.
$$ 
 To get $ \mu(K)\le \mu(RL)$ we only need  to check that $|K\cap\phi^{-1}(u) L|\le |RL\cap\phi^{-1}(u) L|.$ Indeed, if $R\le \phi^{-1}(u),$ then we have $$|K\cap\phi^{-1}(u) L|\le |K|\le|RL|= |RL\cap\phi^{-1}(u) L|,$$ and if $R\ge \phi^{-1}(u),$ then we get $$|K\cap\phi^{-1}(u) L|\le |\phi^{-1}(u) L|=|RL\cap\phi^{-1}(u) L|.$$ Hence, $ \mu(K)\le \mu(RL),$ for any $R>0.$
\end{proof}

Next, we show that Question \ref{BPr} has a negative answer for $n\ge 5$.
\begin{theorem} \label{thm3}
    For $n\ge 5,$ there are convex symmetric bodies $K, L\subset \mathbb{R}^n$  and 
     $\log$-concave measure $\mu$  with density $e^{-\phi(\|x\|_L)}$, such that 
    \begin{equation} \label{D2}
    \mu (rK\cap \xi^\perp)\le \mu (rL\cap \xi^\perp),  \hspace{0.5cm} \forall \xi\in\mathbb{S}^{n-1}, \hspace{0.5cm}\forall r>0,
        \end{equation}
        but $\mu(K) > \mu(L).$
\end{theorem}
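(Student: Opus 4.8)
The plan is to derive Theorem~\ref{thm3} from the negative solution of the classical Busemann--Petty problem in dimension $n\ge 5$, together with the freedom we have in choosing the profile $\phi$. By the known counterexamples \cite{Gardner, GKS, Kold5, Lutwak, Zhang2}, for every $n\ge 5$ there exist convex symmetric bodies $K,L\subset\mathbb R^n$ with $|K\cap\xi^\perp|\le |L\cap\xi^\perp|$ for all $\xi\in\mathbb S^{n-1}$, yet $|K|>|L|$. We fix such a pair $K,L$; only the function $\phi$, and hence the measure $\mu$, remains to be chosen.

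First I would verify the section inequality (\ref{D2}) for an \emph{arbitrary} increasing convex $\phi$. Fix $\xi\in\mathbb S^{n-1}$ and $r>0$. Since $K\cap\xi^\perp$ and $L\cap\xi^\perp$ are convex symmetric bodies in $\xi^\perp$, scaling inside $\xi^\perp$ gives $|rK\cap\xi^\perp|=r^{n-1}|K\cap\xi^\perp|\le r^{n-1}|L\cap\xi^\perp|=|rL\cap\xi^\perp|$. Moreover, for $x\in\xi^\perp$ one has $\|x\|_L=\|x\|_{L\cap\xi^\perp}$, so the restriction of $\mu$ to $\xi^\perp$ is the $(n-1)$-dimensional log-concave measure with density $e^{-\phi(\|x\|_{L\cap\xi^\perp})}$, and $rL\cap\xi^\perp=r(L\cap\xi^\perp)$. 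The Fact proved above is valid in every dimension, so we may apply it in $\xi^\perp\cong\mathbb R^{n-1}$, with $L\cap\xi^\perp$ playing the role of ``$L$'', $rK\cap\xi^\perp$ that of ``$K$'', and $R=r$: from $|rK\cap\xi^\perp|\le|r(L\cap\xi^\perp)|$ we conclude $\mu(rK\cap\xi^\perp)\le\mu(rL\cap\xi^\perp)$. As $\xi$ and $r$ were arbitrary, (\ref{D2}) holds regardless of which increasing convex $\phi$ is used.

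It then remains to pick $\phi$ so that $\mu(K)>\mu(L)$. Since $0$ is an interior point of $L$, we may choose $T\ge 1$ with $K\subseteq TL$ (and hence also $L\subseteq TL$). Set $\phi(s)=\max\{0,s-T\}^2$, a non-constant, increasing, convex function from $[0,\infty)$ to $[0,\infty)$ that vanishes on $[0,T]$. Then $\phi(\|x\|_L)$ is convex, being the composition of the increasing convex function $\phi$ with the norm $\|\cdot\|_L$, so $e^{-\phi(\|x\|_L)}$ is a non-constant log-concave density and, by Borell's characterization, $\mu$ is log-concave. For $x\in K$ we have $\|x\|_L\le T$, hence $\phi(\|x\|_L)=0$ and the density equals $1$ on $K$; thus $\mu(K)=|K|$, and the same argument gives $\mu(L)=|L|$. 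Consequently $\mu(K)=|K|>|L|=\mu(L)$, which together with the previous paragraph proves the theorem.

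The conceptual heart of the argument --- and essentially its only non-routine point --- is the observation that, since the Fact can be applied slice by slice, the full dilation hypothesis in (\ref{D2}) carries no more information than the single family of inequalities $|K\cap\xi^\perp|\le|L\cap\xi^\perp|$, $\xi\in\mathbb S^{n-1}$. Once this is seen, one only needs a classical $n\ge 5$ counterexample to Busemann--Petty together with a one-line choice of $\phi$ that makes $\mu$ coincide with Lebesgue measure on a large dilate $TL\supseteq K$. The remaining points --- log-concavity and non-constancy of the density, and the harmless convention $\phi^{-1}(\phi(0))=T$ that enters when the Fact is invoked for a $\phi$ constant near the origin --- are routine and handled exactly as in the computation (\ref{trick}).
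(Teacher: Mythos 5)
Your proposal is correct, and the first half coincides with the paper's argument: both start from the classical Busemann--Petty counterexamples ($|K\cap\xi^\perp|\le|L\cap\xi^\perp|$ for all $\xi$ but $|K|>|L|$, $n\ge 5$) and both verify the hypothesis (\ref{D2}) for an arbitrary admissible $\phi$ by applying the Fact inside each hyperplane $\xi^\perp$, using homogeneity of volume and the identity $\|x\|_L=\|x\|_{L\cap\xi^\perp}$ for $x\in\xi^\perp$. Where you genuinely diverge is in producing the reverse inequality for the full-dimensional measures. The paper argues by contradiction: if $\mu(tK)\le\mu(tL)$ held for every $t>0$, then the substitution $x\mapsto tx$ and the limit $t\to 0^+$ would force $|K|\le|L|$, contradicting $|K|>|L|$; this shows some dilate $t_0K,\,t_0L$ must violate the measure comparison, but does not identify it. You instead make an explicit choice $\phi(s)=\max\{0,s-T\}^2$ with $K\cup L\subseteq TL$, so that the density is identically $1$ on both bodies and $\mu(K)=|K|>|L|=\mu(L)$ outright. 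Your route buys a fully constructive example (the original pair $K,L$ itself works, with a concretely specified measure) and avoids the limiting argument entirely; the paper's route buys the stronger structural statement that \emph{every} admissible $\phi$ fails for some dilate of the pair, i.e., the failure is not an artifact of a specially tuned density. All the side conditions you check (convexity and monotonicity of $\phi$, log-concavity and non-constancy of the density, finiteness of $\mu$, and the convention for $\phi^{-1}$ when $\phi$ is flat near the origin) are handled correctly and consistently with the paper's conventions.
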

\begin{proof}
    Let us assume, towards the contradiction,  that Question \ref{BPr} has an affirmative answer in $\mathbb{R}^n$, for some fixed $n\ge 5.$ So, for any pair of convex  symmetric bodies $K, L$ that satisfy (\ref{D2}), we would get $\mu(K)\le \mu(L).$ The condition on sections (\ref{D2}) will be also satisfied for the dilated bodies $tK$ and $tL$, for all $t>0.$ Therefore, we have \begin{equation} \label{D2.1}
        \mu(tK) \le \mu (tL), \hspace{.5cm} \forall t>0.
    \end{equation} Which by definition of $\mu$ means $$\int_{tK}{e^{-\phi(||x||_L)} dx} \le \int_{tL}{e^{-\phi(||x||_L)} dx},$$ or equivalently, applying the change of variables $x = tx$, we have $$\int_{K}{e^{-\phi(t||x||_L)} dx} \le \int_{L}{e^{-\phi(t||x||_L)} dx}.$$ Using the continuity of $\phi$ and compactness of $K$ and $L$, we can take the limit for the above inequality as $t \to 0^+$, to obtain $$|K|\le|L|.$$
    Therefore, we have a relation between the dilation problem for a $\log$-concave probability measure, with the Busemann-Petty problem for volume measure, which is if $$ \mu (rK\cap \xi^\perp)\le \mu (rL\cap \xi^\perp)),  \hspace{0.5cm} \forall \xi\in\mathbb{S}^{n-1}, \hspace{0.5cm}\forall r>0,$$ then $|K|\le|L|$.

A number of very interesting counterexamples to the Busemann-Petty problem were shown by Papadimitrakis \cite{Papadimitrakis-1992};  Gardner  \cite{Gardner-1994};   Gardner,  Koldobsky, and Schlumprecht  \cite{GKS}: there are convex symmetric  bodies $K, L$ in $\mathbb{R}^n$ for $n\ge 5$ such that
    \begin{equation} \label{GKS}
        |K \cap\xi ^\perp|\le|L \cap \xi ^\perp|, \hspace{.5cm} \forall \xi\in\mathbb{S}^{n-1},
    \end{equation} but 
    \begin{equation}\label{Gia}
    |K|> |L|.
    \end{equation}
    Note that because the volume measure is homogeneous the condition on sections (\ref{GKS}) is also true for dilates of $K$ and $L$, so we have \begin{equation} \label{GKS1}
        |rK \cap\xi ^\perp|\le|rL \cap \xi ^\perp|, \hspace{.5cm} \forall \xi\in\mathbb{S}^{n-1},\hspace{0.5cm}\forall r>0.
    \end{equation}
Now, applying the Fact  to (\ref{GKS1}), we get that 
$$\mu (rK\cap \xi^\perp)\le \mu (rL\cap \xi^\perp),  \hspace{0.5cm} \forall \xi\in\mathbb{S}^{n-1}, \hspace{0.5cm}\forall r>0.$$
Thus,   using (\ref{D2.1}), we have $$\mu(tK) \le \mu (tL), \hspace{.5cm} \forall t>0, $$
dividing by $t^n$ and taking the limit of the above inequality as $t \to 0^+$ we get $$|K| \le |L|,$$ and this contradicts (\ref{Gia}).  
    
\end{proof}
It is interesting to note that the measure $\mu$ constructed above is very specific. For example, we can not use this construction directly with the assumption that $\mu$ is rotation invariant. 

Still, we can show that the answer to  Question \ref{BPr} is negative in ${\mathbb R}^n$ for $n\ge 7$ even when $\mu$ is a $\log$-concave  measure with rotation invariant density.
\begin{theorem} \label{thm4}
    For dimension $n\ge 7,$ and $d\mu=e^{-\phi(|x|)} dx$ there is a convex  symmetric body $K\subset \mathbb{R}^n$ such that 
    
  $$  \mu (rK\cap \xi^\perp)\le \mu (rB_2^n\cap \xi^\perp),  \hspace{0.5cm} \forall \xi\in\mathbb{S}^{n-1}, \hspace{0.5cm}\forall r>0,$$
        but $\mu(K) > \mu(B_2^n).$
\end{theorem}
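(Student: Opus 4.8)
The plan is to mimic the construction from \cite{Zvavitch3} for the Gaussian dilation problem, but to carry it out with a general rotation invariant $\log$-concave density $e^{-\phi(|x|)}$ in place of the Gaussian one. The key point is that the section condition can be arranged to follow from the \textbf{Fact} proved above (applied in the hyperplane $\xi^\perp$), while the reverse inequality for the full bodies will be obtained by exploiting the same dimension-counting mechanism that makes the classical Busemann-Petty problem fail for $n\ge 5$. Since we need the section comparison against the \emph{ball} $rB_2^n$, we must design $K$ so that every central section $K\cap\xi^\perp$ has $(n-1)$-volume no larger than the volume of the corresponding ball section, and yet $\mu(K)>\mu(B_2^n)$.

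First I would recall the perturbation body used by Gardner--Koldobsky--Schlumprecht and Zvavitch: for a small parameter $\varepsilon>0$ one takes a body of revolution $K_\varepsilon$ obtained from $B_2^n$ by shaving off mass near the equator and redistributing it near the poles (or the body $\{x : |x|^2 + \varepsilon\, g(x_1/|x|) \le 1\}$-type construction), chosen so that for every hyperplane $\xi^\perp$ one has $|K_\varepsilon\cap\xi^\perp|\le |B_2^n\cap\xi^\perp|$ but $|K_\varepsilon| > |B_2^n|$ when $n\ge 5$; the extra two dimensions of slack ($n\ge 7$ rather than $n\ge 5$) come from the fact that rotation invariance of $\mu$ forces us to compare $\mu$-measures of sections rather than volumes, and the worst direction for the Gaussian-type weight eats up two dimensions, exactly as in \cite{Zvavitch3}. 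So I would set $K = K_\varepsilon$ for suitably small $\varepsilon$, and $L = B_2^n$.

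The next step is to upgrade the volume section inequality $|rK\cap\xi^\perp|\le |rB_2^n\cap\xi^\perp|$ for all $r>0$ (which holds because volume scales homogeneously and $K_\varepsilon$ was built to satisfy it) to the $\mu$-section inequality $\mu_{n-1}(rK\cap\xi^\perp)\le \mu_{n-1}(rB_2^n\cap\xi^\perp)$. Here I would apply the \textbf{Fact} dimension by dimension: restricted to the hyperplane $\xi^\perp\cong{\mathbb R}^{n-1}$, the measure $\mu_{n-1}$ has density $e^{-\phi(|x|)}$ which is again rotation invariant and $\log$-concave, so it is of the form covered by the Fact with $L$ the $(n-1)$-ball; since $|rK\cap\xi^\perp|\le |rB_2^n\cap\xi^\perp|=|r'B_2^{n-1}|$ for the appropriate radius, the Fact gives $\mu_{n-1}(rK\cap\xi^\perp)\le \mu_{n-1}(rB_2^n\cap\xi^\perp)$. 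This establishes the hypothesis of the theorem.

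For the conclusion $\mu(K)>\mu(B_2^n)$, I would use the layer-cake representation $\mu(A)=\int_0^\infty e^{-u}\,|A\cap \phi^{-1}(u)B_2^n|\,du$ exactly as in the proof of the Fact. Because $K=K_\varepsilon$ was chosen with $|K|>|B_2^n|$ and $K$ is a small perturbation of $B_2^n$, for every radius $\rho$ close to $1$ one still has $|K\cap\rho B_2^n| > |B_2^n\cap \rho B_2^n| = |\rho B_2^n|$ whenever $\rho$ is large enough (the shaved equatorial mass is interior, the added polar caps stick out, so at scale $\rho\approx 1$ the body $K$ contains strictly more of the annular region), while for small $\rho$ the two intersections agree. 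Integrating against $e^{-u}$ and choosing $\phi$ so that enough $\mu$-mass sits at radii where $K$ wins — e.g. by first fixing $K_\varepsilon$ and then taking $\phi$ steep enough that $\mu$ concentrates on a thin shell near $|x|=1$ — yields $\mu(K) > \mu(B_2^n)$. The main obstacle is this last balancing act: one must verify that the radius-by-radius comparison $|K\cap\rho B_2^n|$ versus $|\rho B_2^n|$ is favorable on a set of $\rho$ of positive measure that can be made to carry the bulk of $\mu$, simultaneously with keeping $\varepsilon$ small enough that the section inequality survives; tracking these two competing smallness requirements, and pinning down exactly why $n\ge 7$ (and not $n\ge 5$) is the right threshold in the rotation invariant case, is the delicate part and is where I expect to follow \cite{Zvavitch3} most closely.
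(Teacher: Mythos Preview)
Your step 2---upgrading $|rK\cap\xi^\perp|\le|rB_2^n\cap\xi^\perp|$ to the $\mu$-section inequality via the Fact applied in each hyperplane---is correct and matches the paper. But steps 1 and 3 contain real gaps. Your explanation of the threshold $n\ge 7$ is wrong: nothing about the weight ``eats two dimensions.'' The restriction is purely volumetric; the paper simply invokes the Giannopoulos--Bourgain counterexamples, which for $n\ge 7$ furnish a convex symmetric $K$ with $|K\cap\xi^\perp|\le|B_2^n\cap\xi^\perp|$ for all $\xi$ yet $|K|>|B_2^n|$. That $K$ is used as is (no perturbation has to be redone), and the Fact immediately yields the $\mu$-section condition for every dilate $rK$.

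The more serious gap is step 3. You propose to choose $\phi$ after fixing $K_\varepsilon$ (``taking $\phi$ steep enough that $\mu$ concentrates on a thin shell''), but in the theorem $\phi$ is \emph{given}: a counterexample $K$ must be produced for an arbitrary rotation-invariant log-concave $\mu$. Your layer-cake claim is also confused, since $|K\cap\rho B_2^n|\le|\rho B_2^n|=|B_2^n\cap\rho B_2^n|$ for every $\rho\le 1$, so no advantage can come from those radii. The paper bypasses all of this with a one-line limit argument, literally repeating the proof of Theorem~3 with $L=B_2^n$: were $\mu(tK)\le\mu(tB_2^n)$ to hold for every $t>0$, the substitution $x\mapsto tx$ followed by $t\to 0^+$ would give $|K|\le|B_2^n|$, contradicting the Giannopoulos--Bourgain property. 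Hence some dilate $t_0K$ satisfies $\mu(t_0K)>\mu(t_0B_2^n)$, and since the $\mu$-section hypothesis is preserved under dilation, $t_0K$ is the desired counterexample. No balancing of $\varepsilon$ against $\phi$ is required.
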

\begin{proof}
Giannopoulos \cite{Giannopoulos-1990}  and  Bourgain \cite{Bourgain1} constructed an example in $\mathbb{R}^n$ for $n\ge 7$ of convex body $K\subset \mathbb{R}^n$, that satisfies 
       $$ |K \cap\xi ^\perp|\le|B_2^n \cap \xi ^\perp|, \hspace{.5cm} \forall \xi\in\mathbb{S}^{n-1},$$
     but $|K|> |B_2^n|$. To prove Theorem  \ref{thm4} one may take the same convex body $K$ and $B_2^n$ as provided in \cite{Giannopoulos-1990, Bourgain1} and repeat the proof of Theorem  \ref{thm3}.   
\end{proof}

\vspace{3mm}

\noindent {\sc Department of Mathematical Sciences, Kent State University, Kent, OH USA.}

\noindent {\it E-mail address:} {\tt mlafi1@kent.edu, zvavitch@math.kent.edu}


\begin{thebibliography}{99}
\addcontentsline{toc}{section}{References}
\setlength{\itemsep}{1pt}

\bibitem{BN} S.G. Bobkov and F. Nazarov, {\it Sharp dilation-type inequalities with fixed parameter of convexity}, Zap. Nauchn. Sem. POMI, 351 (2007), 54--78. English translation in J. Math. Sci. (N.Y.), 152 (2008), 826--839.



\bibitem{Bogachev} V. Bogachev, {\it Gaussian measures}. Mathematical Surveys and Monographs, 62. American Mathematical Society, Providence, RI, 1998. xii+433 pp.

\bibitem{Borell} C. Borell, { \it Convex set functions in d-space}, Period. Math. Hungar 6 (1975), no. 2, 111–136.


\bibitem{Bourgain1} J.  Bourgain, {\it On the Busemann-Petty problem for perturbations of the ball}, Geom. Funct. Anal. 1 (1991), 1-13. 


\bibitem{BP} H.~Busemann and C.M.~Petty, {\it Problems on convex bodies}, Math. Scand. 4 (1956), 88–94.



\bibitem{CFM} D. Cordero-Erausquin, M.  Fradelizi, and B. Maurey,  {\it The (B) conjecture for the Gaussian measure of dilates of symmetric convex sets and related problems}, J. Funct. Anal. 214 (2004), no. 2, 410--427.




\bibitem{Gardner-1994} {\rm R.J.~Gardner}, {\it Intersection bodies and the
Busemann-Petty problem}, Trans. Amer. Math. Soc. 342 (1994), no. 1, 435--445.


\bibitem{Gardner} R.J.~Gardner,  {\it Geometric Tomography}. Second edition, Cambridge University Press, New York, 2006, xxii+492 pp.


\bibitem{GardnerBM} R.J.~Gardner,  {\it The Brunn–Minkowski inequality,}  Bull. Amer. Math. Soc. (N.S.). 39 (2002) no. 3, 355–405

\bibitem{GKS} R.J.~Gardner, A.~Koldobsky and Th.~Schlumprecht, {\it An analytic solution to the Busemann-Petty problem on sections of convex bodies}, Annals of Math. (2) 149 (1999), no. 2, 691–703.

\bibitem{Giannopoulos-1990}  A.~Giannopoulos, {\it A note on a problem of H.~Busemann and
C.~M.~Petty concerning sections of symmetric convex bodies},
Mathematika 37 (1990), no. 2, 239--244.







\bibitem{F} M. Fradelizi, {\it Concentration inequalities for  s--concave measures of dilations of Borel sets and applications,}
Electron. J. Probab. 14 (2009), no. 71, 2068--2090.




\bibitem{Kold5} A.~Koldobsky, {\it Intersection bodies, positive definite distributions and the Busemann-Petty problem},
Amer. J. Math. 120 (1998), no. 4, 827–840.




\bibitem{Lutwak} E.~Lutwak, {\it Intersection bodies and dual mixed volumes}, Adv. in Math. 71 (1988), no. 2, 232–261.


\bibitem{LO} R. Latala and  K. Oleszkiewicz, {\it Gaussian measures of dilations of convex symmetric sets}, Ann. Probab. 27 (1999), no. 4, 1922--1938.




\bibitem{Vmilm} V. Milman, {\it Personal communication}, 2007.

\bibitem{Papadimitrakis-1992} M.~Papadimitrakis, {\it On the Busemann-Petty problem
about convex, centrally symmetric bodies in ${\mathbb R}^n$},
Mathematika {39} (1992), no. 2, 258--266.


\bibitem{Prékopa1} A. Prékopa,   {\it Logarithmic concave measures with application to stochastic programming}, Acta Sci. Math. (Szeged) 32  (1971), 301–316 

\bibitem{Prékopa2} A. Prékopa, {\it On logarithmic concave measures and functions}, Acta Sci. Math. (Szeged) 34 (1973), 335–343. 



\bibitem{Schneider} R. Schneider, {\it Convex Bodies: The Brunn-Minkowski Theory}, Second expanded edition. Encyclopedia of Mathematics and Its Applications 151, Cambridge University Press, Cambridge, 2014.



\bibitem{Zhang2} G. ~Zhang, { \it A positive solution to the Busemann-Petty problem in ${\mathbb R}^4$,}  Annals of Math. (2) 149 (1999), no. 2, 535–543.

\bibitem{Zvavitch1} A.~Zvavitch, {\it Gaussian measure of sections of convex bodies}, Adv. Math. 188 (2004), no. 1, 124–136. 

\bibitem{Zvavitch2} A.~Zvavitch, {\it The Busemann-Petty problem for arbitrary measures}, Math. Ann. 331 (2005), no. 4, 867–887.

\bibitem {Zvavitch3} A.~Zvavitch, {\it Gaussian measure of sections of dilates and translations of convex bodies}, Adv. in Appl. Math. 41 (2008), no. 2, 247-254

\end{thebibliography}
\end{document}